\newcommand{\R}{{\mathbb R}}
\newcommand{\Z}{{\mathbb Z}}
\newcommand{\Sp}{{\mathbb S}}
\newcommand{\ds}{\displaystyle}
\newcommand{\no}{\nonumber}
\newcommand{\be}{\begin{eqnarray}}
\newcommand{\ben}{\begin{eqnarray*}}
\newcommand{\en}{\end{eqnarray}}
\newcommand{\enn}{\end{eqnarray*}}
\newcommand{\pa}{\partial}
\newcommand{\ov}{\overline}
\newcommand{\curl}{{\rm curl\,}}
\newcommand{\I}{{\rm Im}}
\newcommand{\Rt}{{\rm Re}}
\newcommand{\g}{\gamma}
\newcommand{\G}{\Gamma}
\newcommand{\vep}{\varepsilon}
\newcommand{\om}{\omega}
\newcommand{\wid}{\widetilde}
\newcommand{\ra}{\rightarrow}
\newcommand{\se}{\setminus}
\definecolor{xxx}{rgb}{0,0,0}
\newtheorem{theorem}{Theorem}[section]
\newtheorem{lemma}[theorem]{Lemma}
\begin{document}
\renewcommand{\theequation}{\arabic{section}.\arabic{equation}}

\title{\bf Uniqueness in inverse electromagnetic scattering problem with phaseless far-field data
at a fixed frequency}
\author{Xiaoxu Xu\thanks{Academy of Mathematics and Systems Science, Chinese Academy of Sciences,
Beijing 100190, China and School of Mathematical Sciences, University of Chinese
Academy of Sciences, Beijing 100049, China ({\tt xuxiaoxu14@mails.ucas.ac.cn})}
\and
Bo Zhang\thanks{LSEC, NCMIS and Academy of Mathematics and Systems Science, Chinese Academy of
Sciences, Beijing, 100190, China and School of Mathematical Sciences, University of Chinese
Academy of Sciences, Beijing 100049, China ({\tt b.zhang@amt.ac.cn})}
\and
Haiwen Zhang\thanks{NCMIS and Academy of Mathematics and Systems Science, Chinese Academy of Sciences,
Beijing 100190, China ({\tt zhanghaiwen@amss.ac.cn})}
}
\date{}
\maketitle

\begin{abstract}
This paper is concerned with uniqueness in inverse electromagnetic scattering with phaseless far-field
pattern at a fixed frequency. In our previous work [{\em SIAM J. Appl. Math.} {\bf 78} (2018), 3024-3039],
by adding a known reference ball into the acoustic scattering system, it was proved that the impenetrable
obstacle and the index of refraction of an inhomogeneous medium can be uniquely determined by the
acoustic phaseless far-field patterns generated by infinitely many sets of superpositions of two plane
waves with different directions at a fixed frequency.
In this paper, we extend these uniqueness results to the inverse electromagnetic scattering case.
The phaseless far-field data are the modulus of the tangential component in the orientations $\bm e_\phi$
and $\bm e_\theta$, respectively, of the electric far-field pattern measured on the unit sphere and
generated by infinitely many sets of superpositions of two electromagnetic plane waves with different
directions and polarizations. Our proof is mainly based on Rellich's lemma and the Stratton--Chu formula
for radiating solutions to the Maxwell equations.

\vspace{.2in}
{\bf Keywords:} Uniqueness, inverse electromagnetic scattering, phaseless far-field pattern,
perfectly conducting obstacle, impedance obstacle, partially coated obstacle, inhomogeneous medium.
\end{abstract}

\section{Introduction}
\setcounter{equation}{0}

Inverse scattering theory has wide applications in such fields as radar, sonar, geophysics, medical
imaging, and nondestructive testing (see, e.g., \cite{CK,KG}). This paper is concerned with
inverse electromagnetic scattering by a bounded obstacles or an inhomogeneous medium from
phaseless far-field data, associated with incident plane waves at a fixed frequency.

Inverse scattering problems with phased data have been extensively studied both mathematically
and numerically in the past several decades (see, e.g., \cite{C18,CK,KG}). However, in many
applications, it is difficult to measure the phase of the wave field accurately, compared with
the modulus of the wave field. Therefore, it is desirable to reconstruct the scatterers
from the phaseless near-field or far-field data (i.e., the intensity of the near-field or far-field),
which is called the \emph{phaseless inverse scattering problem}.

The main difficulty of inverse scattering problems with phaseless far-field data is the so-called
\emph{translation invariance} property of the phaseless far-field pattern, that is, the modulus of
the far-field pattern generated by one plane wave is invariant under the translation of the scatterers.
This implies that it is impossible to recover the location of the scatterer from the phaseless
far-field data  with one plane wave as the incident field. Several iterative methods have been proposed
in \cite{I2007,IK2010,IK2011,KR} to reconstruct the shape of the scatterer. Under a priori condition that
the sound-soft scatterer is a ball or disk, it was proved in \cite{LZ10} that the radius of the scatterer
can be uniquely determined by a single phaseless far-field datum. It was proved in \cite{Majda} that
the shape of a general, sound-soft, strictly convex obstacle can be uniquely determined by the phaseless
far-field data generated by one plane wave at a high frequency.
However, there is no translation invariance property for phaseless near-field data. Therefore,
many numerical algorithms for inverse scattering problems with phaseless near-field data
have been developed (see, e.g., \cite{C18,CFH17,CH17,KR16,S16,XZZ3} for the acoustic case
and \cite{CH16} for the electromagnetic case). Uniqueness results and stability have also been established
for inverse scattering problems with phaseless near-field data
(see \cite{K14,K17,KR17,MH17,N15,N16,RY18,XZZ19,ZSGL19,ZWGL19} for the acoustic and potential scattering
case and \cite{Romanov17,XZZ19} for the electromagnetic scattering case).

Recently in \cite{ZZ01}, it was proved that the translation invariance property of the phaseless
far-field pattern can be broken by using superpositions of two plane waves as the incident
fields with an interval of frequencies. Following this idea, several algorithms have been developed
for inverse acoustic scattering problems with phaseless far-field data, based on using the superposition
of two plane waves as the incident field (see \cite{ZZ01,ZZ02,ZZ03}). Further, by using the spectral
properties of the far-field operator, rigorous uniqueness results have also been established in \cite{XZZ1}
for inverse acoustic scattering problems with phaseless far-field data generated by infinitely many sets
of superpositions of two plane waves with different directions at a fixed frequency,
under certain a priori assumptions on the property of the scatterers.
In \cite{XZZ2}, by adding a known reference ball into the acoustic scattering system, it was shown that
the uniqueness results obtained in \cite{XZZ1} remain true without the a priori assumptions
on the property of the scatterers.
Note that the idea of adding a known reference ball to the scattering system was first applied
in \cite{LLZ09} to numerically enhance the reconstruction results of the linear sampling method
and also used in \cite{ZG18} to prove uniqueness in inverse acoustic scattering problems with phaseless
far-field data. Note further that there are certain studies on uniqueness for phaseless inverse
scattering problems with using superpositions of two point sources as the incident fields
(see \cite{RY18,SZG18,XZZ19,ZSGL19,ZWGL19}).

The purpose of this paper is to establish uniqueness results in inverse electromagnetic scattering
problems with phaseless far-field data at a fixed frequency, extending the uniqueness results
in \cite{XZZ2} for the acoustic case to the electromagnetic case.
Different from the acoustic case considered in \cite{XZZ2}, the electric far-field pattern is
a complex-valued vector function, so the measurement of the phaseless electric far-field pattern
is more complicated. In practice, one usually makes measurement of the modulus of each tangential
component of the electric total-field or electric far-field pattern on the measurement surface
(see, e.g., \cite{Hansen,Pan11,Schmidt,ZW19}).
Motivated by this and the idea in \cite{XZZ1,XZZ2}, we make use of superpositions of two
electromagnetic plane waves with different directions and polarizations as the incident fields
and consider the modulus of the tangential component in the orientations $\bm e_\phi$
and $\bm e_\theta$, respectively, of the corresponding electric far-field pattern measured on the
unit sphere as the measurement data (called the phaseless electric far-field data).
We then prove that, by adding a known reference ball into the electromagnetic scattering system,
the impenetrable obstacle or the refractive index of the inhomogeneous medium
(under the condition that the magnetic permeability is a positive constant)
can be uniquely determined by the phaseless electric far-field data at a fixed frequency.
Our proof is mainly based on Rellich's lemma and the Stratton--Chu formula for radiating solutions
to the Maxwell equations.

The rest part of this paper is organised as follows. In Section \ref{s2}, we introduce
the electromagnetic scattering problems considered. The uniqueness results for inverse obstacle
and medium electromagnetic scattering with phaseless electric far-field data are presented in
Section \ref{s-o} and \ref{s-m}, respectively. Conclusions are given in Section \ref{s5}.

\section{The electromagnetic scattering problems}\label{s2}

In this section, we introduce the electromagnetic scattering problems considered in this paper.
To give a precise description of the scattering problems, we assume that $D$ is an open and bounded
domain in $\R^3$ with $C^2-$boundary $\pa D$ satisfying that the exterior $\R^3\se\ov{D}$ of $D$ is
connected. Note that $D$ may not be connected and thus may consist of several (finitely many)
connected components. We consider the time-harmonic ($e^{-i\om t}$ time dependence) incident
electromagnetic plane waves described by the matrices $E^i(x,d)$ and $H^i(x,d)$ defined by
\be\label{inc}
&& E^i(x,d)p:=\frac ik{\rm curl}\,{\rm curl}\,pe^{ikx\cdot d}=ik(d\times p)\times de^{ikx\cdot d},
\quad x\in\R^3,\\ \label{inc+}
&& H^i(x,d)p:={\rm curl}\,pe^{ikx\cdot d}=ikd\times pe^{ikx\cdot d},\quad x\in\R^3,
\en
where $d\in\Sp^2$ is the incident direction with $\Sp^2$ being the unit sphere, $p\in\R^3$ is
the polarization vector, $k=\om/\sqrt{\varepsilon_0\mu_0}$ is the wave number,
$\om$ is the frequency, and $\varepsilon_0$ and $\mu_0$ are the electric permittivity and magnetic
permeability of a homogeneous medium, respectively.

When $D$ is an impenetrable obstacle, then the scattering problem can be modeled by the
exterior boundary value problem:
\begin{subequations}
\be\label{me1}
{\rm curl}\,E-ikH=0 &&\;\; \text{in}\;\;\R^3\se\ov{D},\\ \label{me2}
{\rm curl}\,H+ikE=0 &&\;\; \text{in}\;\;\R^3\se\ov{D},\\ \label{bc}
\mathscr BE=0 &&\;\; \text{on}\;\;\pa D,\\ \label{smrc}
\lim_{r\ra\infty}(H^s\times x-rE^s)=0, &&\;\; r=|x|,
\en
\end{subequations}
where $(E^s,H^s)$ is the scattered field, $E:=E^i+E^s$ and $H:=H^i+H^s$ are the electric
total-field and the magnetic total-field, respectively, the equations (\ref{me1})--(\ref{me2})
are the Maxwell equations, and (\ref{smrc}) is the Silver--M\"uller radiation condition.
The boundary condition $\mathscr B$ in (\ref{bc}) depends on the physical property of
the obstacle $D$, that is, $\mathscr{B}E=\nu\times E$ if $D$ is a perfectly conducting obstacle,
$\mathscr{B}E=\nu\times\curl{E}-i\lambda(\nu\times E)\times\nu=0$ if $D$ is an impedance obstacle,
and $\mathscr{B}E=\nu\times E$ on $\G_D$,
$\mathscr{B}E=\nu\times\curl{E}-i\lambda(\nu\times E)\times\nu=0$ on $\G_I$ if $D$ is a partially
coated obstacle,
where $\nu$ is the unit outward normal vector on the boundary $\pa D$. Here, for the case
when $D$ is an impedance obstacle, we assume that $\lambda$ is the impedance function on
$\pa D$ with $\lambda\in C(\pa D)$ and $\lambda(x)\ge 0$ for all $x\in\pa D$.
Further, for the case when $D$ is a partially coated obstacle, we assume that $\pa D$ has a
Lipschitz dissection $\pa D=\G_D\cup\Pi\cup\G_I$ with $\G_D$ and $\G_I$ being disjoint and
relatively open subsets of $\pa D$ and having $\Pi$ as their common boundary in $\pa D$
(see, e.g. \cite{M}) and $\lambda$ is the impedance function on $\G_I$ with $\lambda\in C(\G_I)$
and $\lambda(x)\ge 0$ for all $x\in\G_I$.

When $D$ is an inhomogeneous medium, we assume that the magnetic permeability $\mu=\mu_0$ is a positive
constant in the whole space. Then the scattering problem is modeled by the medium scattering problem
\begin{subequations}
\be\label{men1}
{\rm curl}\,E-ikH=0 &&\;\;\text{in}\;\;\R^3,\\ \label{men2}
{\rm curl}\,H+iknE=0 &&\;\;\text{in}\;\;\R^3,\\ \label{smrcn}
\lim_{r\ra\infty}(H^s\times x-rE^s)=0, &&\;\; r=|x|,
\en
\end{subequations}
where $(E^s,H^s)$ is the scattered field, $E:=E^i+E^s$ and $H:=H^i+H^s$ are the electric total-field
and the magnetic total-field, respectively. The refractive index $n$ in (\ref{men2}) is given by
\ben
n(x):=\frac1{\vep_0}\left(\vep(x)+i\frac{\sigma(x)}\om\right),
\enn
where $\vep(x)$ is the electric permittivity with $\vep(x)\geq\vep_{min}$ in $\R^3$
for a constant $\vep_{min}>0$ and
$\sigma(x)$ is the electric conductivity with $\sigma(x)\geq0$ in $\R^3$.
We assume further that $n-1$ has a compact support $\ov{D}$ and $n\in C^{2,\gamma}(\R^3)$ for $0<\gamma<1$.
From the above assumptions, it can be seen that
${\rm Re}[n(x)]\geq n_{min}:=\varepsilon_{min}/\varepsilon_0>0$ and ${\rm Im}[n(x)]\geq0$ for all $x\in\R^3$.

The existence of a unique (variational) solution to the problems (\ref{me1})--(\ref{smrc})
and (\ref{men1})--(\ref{smrcn}) has been proved in \cite{CCM,CCM2,CK81,CK,M}
(see Theorem 6.21 in \cite{CK} and Theorem 10.8 in \cite{M} for scattering by a perfectly
conducting obstacle or an impedance obstacle with $\lambda\equiv0$ on $\pa D$,
Theorems 6.11 and 9.11 in \cite{CK}
for scattering by an impedance obstacle with constant impedance function,
Theorems 2.1 and 3.3 in \cite{CK81} for scattering by an impedance obstacle with
$\lambda\in C^{0,\gamma}(\pa D)$, Theorem 3.5 in \cite{CCM2} (see also Theorem 2.7
in \cite{CCM}) for scattering by a partly coated obstacle or an impedance obstacle
and Theorem 5.5 in \cite{KG} (see also Theorem 9.5 in \cite{CK}) for scattering by an inhomogeneous medium).
In particular, it is well-known from \cite{CK} that the electric and magnetic scattered fields $E^s$
and $H^s$ have the asymptotic behavior
\ben
&&E^s(x,d)p=\frac{e^{ik|x|}}{|x|}\left\{E^\infty(\hat x,d)p+O\left(\frac1{|x|}\right)\right\},\;\;|x|\ra\infty,\\
&&H^s(x,d)p=\frac{e^{ik|x|}}{|x|}\left\{H^\infty(\hat x,d)p+O\left(\frac1{|x|}\right)\right\},\;\;|x|\ra\infty
\enn
uniformly for all observation directions $\hat x=x/|x|\in\Sp^2$, where
$E^\infty(\hat x,d)p$ is the electric far-field pattern of $E^s(x,d)p$ and $H^\infty(\hat x,d)p$
is the magnetic far-field pattern of $H^s(x,d)p$ for any $p\in\R^3$,
satisfying that (see \cite[(6.24)]{CK})
\be\label{EfHf}
H^\infty(\hat x,d)p=\hat x\times E^\infty(\hat x,d)p,\;\;\;
\hat x\cdot E^\infty(\hat x,d)p=\hat x\cdot H^\infty(\hat x,d)p=0.
\en
Because of the linearity of the direct scattering problem with respect to the incident field,
the scattered waves, the total-fields and the corresponding far-field patterns can be represented
by matrices $E^s(x,d)$ and $H^s(x,d)$, $E(x,d)$ and $H(x,d)$, and $E^\infty(\hat x,d)$ and
$H^\infty(\hat x,d)$, respectively. Each component of the matrices $E^\infty(\hat x,d)$ and
$H^\infty(\hat x,d)$ is an analytic function of $\hat x\in\Sp^2$ for each $d\in\Sp^2$ and
of $d\in\Sp^2$ for each $\hat x\in\Sp^2$ (see, e.g., \cite{CK}).

Throughout this paper, we assume that the wave number $k$ is arbitrarily fixed, i.e.,
the frequency $\om$ is arbitrarily fixed. Following \cite{XZZ1,XZZ2,ZZ01,ZZ02}, we make use
of the following superposition of two plane waves as the incident (electric) field:
\ben
E^i:=E^i(x,d_1,d_2,p_1,p_2)=E^i(x,d_1)p_1+E^i(x,d_2)p_2
=\frac ik{\rm curl}\,{\rm curl}\,p_1e^{ikx\cdot d_1}
+\frac ik{\rm curl}\,{\rm curl}\,p_2e^{ikx\cdot d_2},
\enn
where $d_1,d_2\in\Sp^2$ and $p_1,p_2\in\R^3$. Then the (electric) scattered field $E^s$ has the
asymptotic behavior
\ben
E^s(x,d_1,d_2,p_1,p_2)=\frac{e^{ik|x|}}{|x|}\left\{E^\infty(\hat x,d_1,d_2,p_1,p_2)+O\left(\frac1{|x|}\right)\right\},\;|x|\ra\infty
\enn
uniformly for all observation directions $\hat x\in\Sp^2$. From the linear superposition principle
it follows that
\ben
E^s(x,d_1,d_2,p_1,p_2)=E^s(x,d_1)p_1+E^s(x,d_2)p_2
\enn
and
\be\label{lsp}
E^\infty(\hat x,d_1,d_2,p_1,p_2)=E^\infty(\hat x,d_1)p_1+E^\infty(\hat x,d_2)p_2,
\en
where $E^s(x,d_j)p_j$ and $E^\infty(\hat x,d_j)p_j$ are the (electric) scattered field and its
far-field pattern corresponding to the incident electric field $E^i(x,d_j)p_j$, respectively, $j=1,2$.

Following the idea in \cite{Hansen,Pan11,Schmidt,ZW19}, we measure the modulus of the tangential
component of the electric far-field pattern on the unit sphere $\Sp^2$. To present the tangential
components, we introduce the spherical coordinates
\ben
\begin{cases}
\hat x_1=\sin\theta\cos\phi,\\
\hat x_2=\sin\theta\sin\phi,\\
\hat x_3=\cos\theta,
\end{cases}
\enn
with $\hat x:=(\hat x_1,\hat x_2,\hat x_3)\in\Sp^2$ and $(\theta,\phi)\in[0,\pi]\times[0,2\pi)$.
For any $\hat x\in\Sp^2\se\{N,S\}$, the spherical coordinates give an one-to-one correspondence
between $\hat x$ and $(\phi,\theta)$, where $N:=(0,0,1)$ and $S:=(0,0,-1)$ denote the north and
south poles of $\Sp^2$, respectively. Define
\ben
\bm e_\phi(\hat x):=(-\sin\phi,\cos\phi,0),\;\;\;\;
\bm e_\theta(\hat x):=(\cos\theta\cos\phi,\cos\theta\sin\phi,-\sin\theta).
\enn
Then $\bm e_\phi(\hat{x})$ and $\bm e_\theta(\hat{x})$ are two orthonormal tangential vectors of
$\Sp^2$ at $\hat x\notin\{N,S\}$. Thus the phaseless far-field data we use are
$|\bm e_m(\hat{x})\cdot E^\infty(\hat{x},d_1,d_2,p_1,p_2)|$, $\hat x\in\Sp^2\se\{N,S\}$,
$m\in\{\phi,\theta\}$, $d_j\in\Sp^2$ and $p_j\in\R^3$ such that $d_j\bot p_j$, $j=1,2$.

{\em The inverse electromagnetic obstacle (or medium) scattering problem} we consider in this paper
is to reconstruct the obstacle $D$ and its physical property (or the refractive
index $n$ of the inhomogeneous medium) from the phaseless far-field data
$|\bm e_m(\hat{x})\cdot E^\infty(\hat{x},d_1,d_2,p_1,p_2)|$, $\hat x\in\Sp^2\se\{N,S\}$,
$m\in\{\phi,\theta\}$, $d_j\in\Sp^2$ and $p_j\in\R^3$ such that $d_j\bot p_j$, $j=1,2$.
The purpose of this paper is to establish the uniqueness results for these inverse problems.

\section{Uniqueness for inverse electromagnetic obstacle scattering}\label{s-o}
\setcounter{equation}{0}

Let $B$ be a given, perfectly conducting ball and let us assume that $k$ is not a Maxwell eigenvalue
in $B$. Here, $k$ is called a Maxwell eigenvalue in $B$ if the electromagnetic interior boundary
value problem
\be\label{eq6}
{\rm curl}\,\wid E-ik\wid H=0 && \text{in}\;\;B,\\ \label{eq6+}
{\rm curl}\,\wid H+ik\wid E=0 && \text{in}\;\;B,\\ \label{eq7}
\nu\times\wid E=0 && \text{on}\;\;\pa B
\en
has a nontrivial solution $(\wid E,\wid H)$. Note that, if the radius $r$ of the ball $B$ is chosen
so that $j_n(kr)\neq0$ and $j_n(kr)+krj'_n(kr)\neq0$ for $n=0,1,\ldots$, then
$k$ is not a Maxwell eigenvalue in $B$ (see \cite[Page 252]{CK}), where $j_n$ denotes the spherical
Bessel function of order $n$.

Now, denote by $E_j^s$, $H_j^s$, $E_j^\infty$, and $H_j^\infty$ the electric scattered field,
the magnetic scattered field, the electric far-field pattern and the magnetic far-field pattern,
respectively, associated with the obstacle $D_j\cup B$ and corresponding to the incident
electromagnetic waves $E^i$ and $H^i$, $j=1,2$. The geometry of the scattering problem is given in
Figure \ref{figo}. Then we have the following uniqueness result for the inverse electromagnetic obstacle
problem.
\begin{figure}[!ht]
\centering
\includegraphics[width=0.5\textwidth]{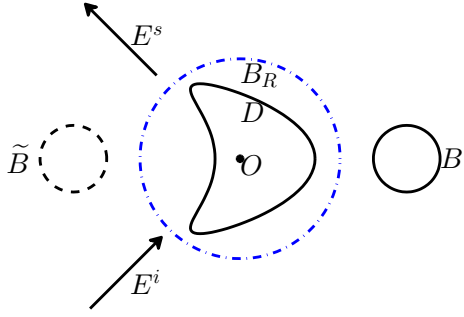}
\vspace{-1cm}
\caption{Scattering by a bounded obstacle.
}\label{figo}
\end{figure}

\begin{theorem}\label{o}
Assume that $B$ is a given perfectly conducting reference ball such that $k$ is not a Maxwell
eigenvalue in $B$. Suppose $D_1$ and $D_2$ are two obstacles with $\ov{D_1\cup D_2}\subset B_R$,
where $B_R$ is a ball of radius $R$ and centered at the origin satisfying that
$\ov{B}\cap\ov{B_R}=\emptyset$. If the corresponding electric far-field patterns satisfy that
\be\label{m=}
|\bm e_m(\hat x)\cdot E_1^\infty(\hat x,d_1,d_2,p_1,p_2)|
=|\bm e_m(\hat x)\cdot E_2^\infty(\hat x,d_1,d_2,p_1,p_2)|
\en
for all $\hat x\in\Sp^2\se\{N,S\}$, $d_1,d_2\in\Sp^2$, $m\in\{\phi,\theta\}$ and $p_1,p_2\in\R^3$
satisfying that $d_1\bot p_1$ and $d_2\bot p_2$, then $D_1=D_2$ and $\mathscr B_1=\mathscr B_2$.
\end{theorem}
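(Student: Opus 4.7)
The plan is to reduce the phaseless identity (\ref{m=}) to the phased far-field equality $E_1^\infty(\hat x,d)p=E_2^\infty(\hat x,d)p$ on all of $\Sp^2\times\Sp^2$ with $p\in\R^3$, $p\bot d$, and then invoke Rellich's lemma for Maxwell's equations together with a standard boundary comparison argument to conclude that $D_1=D_2$ and $\mathscr B_1=\mathscr B_2$. The role of the reference ball $B$ is to break the translation invariance of the phaseless data and, more crucially, to provide a known analytic signal against which the phase of the unknown cross-terms can be calibrated.

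I would first exploit the linearity (\ref{lsp}). Set $A_j:=\bm e_m(\hat x)\cdot E_j^\infty(\hat x,d_1)p_1$ and $B_j:=\bm e_m(\hat x)\cdot E_j^\infty(\hat x,d_2)p_2$. Taking $p_2=0$ in (\ref{m=}) (admissible because the zero vector is perpendicular to $d_2$) yields the single-wave modulus identity $|A_1|=|A_2|$; symmetrically $|B_1|=|B_2|$. Expanding $|A_j+B_j|^2$ then gives the cross-term real-part identity
\[
\Rt\bigl(A_1\overline{B_1}\bigr)=\Rt\bigl(A_2\overline{B_2}\bigr).
\]
Adapting the strategy of \cite{XZZ2}, I would next use the reference ball to upgrade this to the full complex cross-term identity $A_1\overline{B_1}=A_2\overline{B_2}$: because the far-field contribution of $B$ alone is a known, nonvanishing analytic quantity (the hypothesis that $k$ is not a Maxwell eigenvalue in $B$ is used here), testing (\ref{m=}) with auxiliary data $(d_2,p_2)$ chosen so that this known signal isolates the missing imaginary piece produces enough additional equations to pin down the cross term. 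Combined with $|A_1|=|A_2|$, $|B_1|=|B_2|$, and variation of the auxiliary data, this gives the tangential equality $\bm e_m\cdot E_1^\infty=\bm e_m\cdot E_2^\infty$ for $m\in\{\phi,\theta\}$ and $\hat x\in\Sp^2\se\{N,S\}$. Because (\ref{EfHf}) forces the electric far-field pattern to lie in the tangent plane to $\Sp^2$, and $\{\bm e_\phi(\hat x),\bm e_\theta(\hat x),\hat x\}$ is an orthonormal frame, this upgrades to the full vector equality $E_1^\infty=E_2^\infty$ on $\Sp^2\se\{N,S\}$ and, by analyticity in $\hat x$, on all of $\Sp^2$.

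Once phased far-field equality is in hand, Rellich's lemma for Maxwell yields $E_1^s=E_2^s$ and $H_1^s=H_2^s$ in the unbounded connected component of $\R^3\se\overline{D_1\cup D_2\cup B}$. A standard argument then forces $D_1=D_2$: assuming otherwise, one constructs via the Stratton--Chu representation a nontrivial radiating Maxwell field supported inside a component of the symmetric difference satisfying $\mathscr B_1$ on part of its boundary and vanishing traces elsewhere, violating well-posedness of the associated boundary-value problem (the hypothesis that $k$ is not a Maxwell eigenvalue in $B$ rules out the resonant exception); matching boundary traces on $\pa D_1=\pa D_2$ yields $\mathscr B_1=\mathscr B_2$. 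The main technical obstacle is the phase-retrieval step in the second paragraph: because the polarizations $p_j$ are constrained to be real, the naive ``multiply by $i$'' trick for extracting the imaginary part of the cross term is unavailable, and the reference ball must be used substantively rather than cosmetically—this is precisely the step whose difficulty the addition of $B$ to the scattering system is designed to overcome.
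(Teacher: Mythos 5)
There is a genuine gap, and it sits exactly where you flag the ``main technical obstacle'': your proposed mechanism for the phase-retrieval step cannot work, even in principle. From $|A_1|=|A_2|$, $|B_1|=|B_2|$ and $\Rt\bigl(A_1\ov{B_1}\bigr)=\Rt\bigl(A_2\ov{B_2}\bigr)$ no amount of ``variation of the auxiliary data $(d_2,p_2)$'' can pin down the imaginary part of the cross term, because the entire data set (\ref{m=}) is invariant under the conjugation ambiguity: if $\bm e_m(\hat x)\cdot E_2^\infty(\hat x,d)p=e^{i\gamma}\,\ov{\bm e_m(\hat x)\cdot E_1^\infty(\hat x,d)p}$ held for all $d,p$, then every modulus identity you can test would be satisfied while $\It\bigl(A_2\ov{B_2}\bigr)=-\It\bigl(A_1\ov{B_1}\bigr)$. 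Consequently the cosine identity one extracts from the cross terms has two branches, (\ref{vartheta1}) and (\ref{vartheta2}), and the conjugate branch must be excluded by a scattering-theoretic argument, not by algebra on the far-field sphere. This is precisely the content of the paper's Lemma \ref{conj*}: via the Stratton--Chu representation, the conjugated far field is the far field of a radiating solution for the reflected geometry $\wid{B}\cup B_R$, so (\ref{m2}) would let $E_1^s$ be continued analytically across $B$; the total field then solves the interior problem (\ref{eq6})--(\ref{eq7}) in $B$ and vanishes since $k$ is not a Maxwell eigenvalue, a contradiction. Your proposal has no counterpart to this lemma. Worse, your calibration premise is unsound: $E_j^\infty$ is the far field of the composite scatterer $D_j\cup B$, and because of multiple scattering it is \emph{not} the superposition of a known contribution of $B$ and an unknown contribution of $D_j$, so there is no ``known, nonvanishing analytic quantity'' in the data to test against; the reference ball can only be exploited in the near field, through its boundary condition, after Rellich's lemma.

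There is a second, related gap on the good branch. Even granting the full complex cross-term identity, the moduli and cross terms only determine the far fields up to an unknown unimodular factor depending on the \emph{observation} direction: $\bm e_m(\hat x)\cdot E_1^\infty(\hat x,d)p=e^{i\alpha^{(m)}(\hat x)}\,\bm e_m(\hat x)\cdot E_2^\infty(\hat x,d)p$, as in (\ref{alphax}) --- not the tangential equality you assert. You cannot feed this to Rellich's lemma, since $e^{i\alpha^{(m)}(\hat x)}E_2^\infty(\hat x,d)p$ is in general not the far-field pattern of any radiating Maxwell solution. The paper's essential move, missing from your sketch, is the reciprocity relation $E_j^\infty(\hat x,d)=[E_j^\infty(-d,-\hat x)]^\top$, which transfers the phase to the incident direction, yielding (\ref{m1+}) with factor $e^{i\alpha^{(m)}(-d)}$; only then does Rellich's lemma give (\ref{eq2}), and the perfectly conducting condition on $\pa B$ plus the non-eigenvalue hypothesis forces $e^{i\alpha^{(m)}(-d)}=1$. (The paper also needs (\ref{dd}) to drop the constraint $p\perp d$ so that $p$ ranges over an open set of $\R^3$, which is what makes the analyticity and branch-selection arguments run.) Finally, your last paragraph misattributes the eigenvalue hypothesis: once (\ref{1}) is established, $D_1=D_2$ and $\mathscr B_1=\mathscr B_2$ follow from the cited classical phased-data uniqueness theorems, with no further use of the assumption on $B$; its only roles are the two phase-calibration steps above.
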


To prove Theorem \ref{o}, we need the following lemma.

\begin{lemma}\label{conj*}
Under the assumptions of Theorem $\ref{o}$, the following equation does not hold:
\be\label{conj}
E_1^\infty(\hat x,d_0)\bm e_m(d_0)=e^{i\beta}\ov{E_2^\infty(\hat x,d_0)}\bm e_m(d_0)
\;\;\;\forall\hat x\in\Sp^2,
\en
where $m\in\{\phi,\theta\}$ and $d_0\in\Sp^2\se\{N,S\}$ are arbitrarily fixed and
$\beta$ is a real constant.
\end{lemma}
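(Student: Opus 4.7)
I would argue by contradiction, assuming $(\ref{conj})$ holds for some fixed $m\in\{\phi,\theta\}$, $d_0\in\Sp^2\se\{N,S\}$, and $\beta\in\R$. Writing $p_0:=\bm e_m(d_0)$ for brevity, the corresponding incident field reduces to the polarised plane wave $E^i(x,d_0)p_0=ikp_0 e^{ikx\cdot d_0}$, with scattered fields $E_j^s(\cdot,d_0)p_0$ and far-fields $E_j^\infty(\cdot,d_0)p_0$ for $j=1,2$. My plan is to turn $(\ref{conj})$ into an identity between radiating Maxwell fields via Rellich's lemma, transport that identity back to $\partial B$ via the Stratton--Chu representation, and then use the perfectly-conducting condition on $\partial B$ to produce a nonzero solution of the interior Maxwell problem $(\ref{eq6})$--$(\ref{eq7})$ in $B$, contradicting the standing hypothesis that $k$ is not a Maxwell eigenvalue of $B$.

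For the first step I would introduce the unique radiating Maxwell field $W$ (defined for $|x|>R_0$ with $R_0$ large enough that $\ov{D_1\cup D_2\cup B}\subset B_{R_0}$) whose electric far-field pattern equals $e^{i\beta}\,\overline{E_2^\infty(\cdot,d_0)p_0}$. Such a $W$ exists because the right-hand side is a tangential, real-analytic vector field on $\Sp^2$, so it is realisable as a convergent vector Atkinson--Wilcox series outside a sufficiently large ball. By $(\ref{conj})$, $W$ shares its electric far-field pattern with $E_1^s(\cdot,d_0)p_0$, so Rellich's lemma for the Maxwell system forces $W=E_1^s(\cdot,d_0)p_0$ throughout $|x|>R_0$.

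For the second step I would represent $E_1^s(\cdot,d_0)p_0$ through the Stratton--Chu formula with the Cauchy data $(\nu\times E_1,\nu\times H_1)$ on $\partial(D_1\cup B)$, and represent $W$ analogously; because the far-field of $W$ is obtained from that of $E_2^s(\cdot,d_0)p_0$ by complex conjugation, its matching Stratton--Chu representation naturally involves the \emph{conjugates} of the Cauchy data of $(E_2,H_2)$ on $\partial(D_2\cup B)$, with the plane-wave kernel $e^{-ik\hat x\cdot y}$ replaced by $e^{+ik\hat x\cdot y}$. Combining the two representations with the identity $W=E_1^s(\cdot,d_0)p_0$ from the first step, and using that $\partial B$ is a common component of both $\partial(D_1\cup B)$ and $\partial(D_2\cup B)$ on which $\nu\times E_j=0$ holds for $j=1,2$, I would isolate on $\partial B$ the Cauchy data of a nonzero field $(\wid E,\wid H)$ that solves the Maxwell equations inside $B$ with $\nu\times\wid E=0$ on $\partial B$. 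This immediately contradicts the hypothesis that $k$ is not a Maxwell eigenvalue of $B$.

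The main technical obstacle I anticipate is ensuring the nontriviality of the interior field $(\wid E,\wid H)$ produced above. The structural reason to expect success is that the conjugation in $(\ref{conj})$ swaps outgoing for incoming modes, while the reference ball $B$ is the only object common to the two scattering systems $D_1\cup B$ and $D_2\cup B$; any reconciliation between outgoing and incoming behaviour must therefore be witnessed on $\partial B$. The perfectly-conducting condition there supplies precisely the half of the Cauchy data needed to identify the resulting boundary object as the trace of an interior Maxwell solution, and the explicit form of the incident plane wave $E^i(x,d_0)p_0$ on $\partial B$ should be used to rule out accidental vanishing of that trace. Executing this bookkeeping carefully, by separating the outgoing and incoming contributions in the two Stratton--Chu representations and exploiting the mismatch of their kernels $e^{\mp ik\hat x\cdot y}$, is the technical heart of the argument.
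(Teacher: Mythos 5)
There is a genuine gap, and you have in fact located it yourself: the step you defer as ``the technical heart'' --- reconciling the kernels $e^{\mp ik\hat x\cdot y}$ --- is precisely the one idea the argument cannot do without, and your sketch supplies no mechanism for it. The paper resolves it with a reflection trick: write the far field of $E_2^s(\cdot,d_0)\bm e_m(d_0)$ via Stratton--Chu over $\pa B\cup\pa B_R$, conjugate, and then substitute $y\mapsto -y$. Because $B_R$ is centered at the origin (hence invariant under the reflection) and $\ov B\cap\ov{B_R}=\emptyset$ (hence $0\notin B$ and $\wid B\cap B=\emptyset$, where $\wid B:=\{x:-x\in B\}$), the conjugated expression becomes the far field of a genuine radiating Stratton--Chu potential supported on $\pa\wid B\cup\pa B_R$. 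Rellich's lemma then identifies $e^{-i\beta}E_1^s(\cdot,d_0)\bm e_m(d_0)$ with this potential, and that is the decisive payoff: $E_1^s$ extends analytically to $\R^3\se\ov{\wid B\cup B_R}$, in particular across $\ov B$ itself. Your auxiliary field $W$ cannot deliver this: by Rellich's lemma $W$ is nothing but $E_1^s(\cdot,d_0)\bm e_m(d_0)$, which is already a radiating solution with the prescribed far field under hypothesis (\ref{conj}), and an Atkinson--Wilcox expansion only defines it outside a large ball $B_{R_0}\supset B$, giving no information near $\pa B$. Likewise, matching two boundary-integral representations ``on $\pa B$'' does not work as stated: the conjugated representation with kernel $e^{+ik\hat x\cdot y}$ is not a radiating potential, so nothing forces the two representations to agree anywhere, and no interior Cauchy data can be isolated without first performing the reflection.

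Your endgame is also inverted relative to what actually closes the proof, which is why you are stuck worrying about nontriviality. The paper does not manufacture a \emph{nonzero} interior eigenfunction. Instead, once $E_1^s$ is extended across $B$, the total field $E_1=E^i(\cdot,d_0)\bm e_m(d_0)+E_1^s(\cdot,d_0)\bm e_m(d_0)$ is a Maxwell solution in $B$ satisfying $\nu\times E_1=0$ on $\pa B$ (the PEC condition it satisfied all along), so the assumption that $k$ is not a Maxwell eigenvalue forces $E_1\equiv 0$ \emph{in} $B$; analyticity then propagates $E_1\equiv 0$ to all of $\R^3\se\ov{B_R}$, which is impossible since a plane wave plus a decaying scattered field cannot vanish identically. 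Thus the non-eigenvalue hypothesis is used to kill the field, and the contradiction comes from the incident wave at infinity; no ``accidental vanishing of the trace'' needs to be ruled out. To repair your proposal, add the reflection change of variables (justified exactly by $B_R$ being origin-centered and disjoint from $\ov B$) and replace your final step with this vanishing-plus-analytic-continuation argument.
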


\begin{proof}
Assume to the contrary that (\ref{conj}) holds. Then, by using the Stratton--Chu formula
(see \cite[Theorem 6.7]{CK}), we have that the electric scattered field $E_2^s(x,d_0)\bm e_m(d_0)$
satisfies that
\ben
E_2^s(x,d_0)\bm e_m(d_0)&=&{\rm curl}\,\int_{\pa B\cup\pa B_R}\nu(y)\times
    [E_2^s(y,d_0)\bm e_m(d_0)]\Phi(x,y)ds(y)\\
&&-\frac1{ik}{\rm curl}\,{\rm curl}\,\int_{\pa B\cup\pa B_R}\nu(y)\times [H_2^s(y,d_0)\bm e_m(d_0)]\Phi(x,y)ds(y),\;x\in\R^3\se\ov{B\cup B_R},
\enn
where $\nu(y)$ is the unit normal vector at $y\in\pa B$ or $y\in\pa B_R$ directed into the exterior of
$B$ or $B_R$ and $\Phi(x,y)$ is the fundamental solution to the Helmholtz equation in $\R^3$ given by
\ben
\Phi(x,y):=\frac1{4\pi}\frac{e^{ik|x-y|}}{|x-y|},\;\;\;x\neq y.
\enn
Then it follows from \cite[(6.25)]{CK} that the corresponding far-field pattern
$E_2^\infty(\hat x,d_0)\bm e_m(d_0)$ is given as
\ben
&&E_2^\infty(\hat x,d_0)\bm e_m(d_0)\\
&&\quad=\frac{ik}{4\pi}\hat x\times\int_{\pa B\cup\pa B_R}
\left\{\nu(y)\times [E_2^s(y,d_0)\bm e_m(d_0)]+\left[\nu(y)\times [H_2^s(y,d_0)\bm e_m(d_0)]\right]
\times\hat x\right\}e^{-ik\hat x\cdot y}ds(y),\;\;\hat x\in\Sp^2.\qquad
\enn
By this and (\ref{conj}) we deduce that for any $\hat x\in\Sp^2$,
\ben
&&e^{-i\beta}E_1^\infty(\hat x,d_0)\bm e_m(d_0)=\ov{E_2^\infty(\hat x,d_0)}\bm e_m(d_0)\\
&&\;=-\frac{ik}{4\pi}\hat x\times\int_{\pa B\cup\pa B_R}\left\{\nu(y)\times[\ov{E_2^s(y,d_0)}
\bm e_m(d_0)]+\left[\nu(y)\times[\ov{H_2^s(y,d_0)}\bm e_m(d_0)]\right]
\times\hat x\right\}e^{ik\hat x\cdot y}ds(y)\\
&&\;=-\frac{ik}{4\pi}\hat x\times\int_{\pa\wid{B}\cup\pa B_R}\left\{-\nu(y)
\times[\ov{E_2^s(-y,d_0)}\bm e_m(d_0)]+\left[-\nu(y)
\times[\ov{H_2^s(-y,d_0)}\bm e_m(d_0)]\right]\times\hat x\right\}e^{-ik\hat x\cdot y}ds(y)\\
&&\;=\frac{ik}{4\pi}\hat x\times\int_{\pa\wid{B}\cup\pa B_R}\left\{\nu(y)
\times[\ov{E_2^s(-y,d_0)}\bm e_m(d_0)]+\left[\nu(y)
\times[\ov{H_2^s(-y,d_0)}\bm e_m(d_0)]\right]\times\hat x\right\}e^{-ik\hat x\cdot y}ds(y),\quad
\enn
where $\wid{B}:=\{x\in\R^3:-x\in B\}$. Then, by Rellich's lemma we have
\be\label{E1s}
e^{-i\beta}E_1^s(x,d_0)\bm e_m(d_0)&=&{\rm curl}\,\int_{\pa\wid{B}\cup\pa B_R}\nu(y)
\times[\ov{E_2^s(-y,d_0)}\bm e_m(d_0)]\Phi(x,y)ds(y)\\ \no
&&-\frac1{ik}{\rm curl}\,{\rm curl}\,\int_{\pa\wid{B}\cup\pa B_R}\nu(y)
\times[\ov{H_2^s(-y,d_0)}\bm e_m(d_0)]\Phi(x,y)ds(y),\;x\in\R^3\se\ov{\wid{B}\cup B_R}.
\en
This implies that $E_1^s(\cdot,d_0)\bm e_m(d_0)$ can be analytically extended into
$\R^3\se\ov{\widetilde B\cup B_R}$. Since
\ben
H_1^s(\cdot,d_0)\bm e_m(d_0)=\frac1{ik}{\rm curl}\,[E_1^s(\cdot,d_0)\bm e_m(d_0)],
\enn
then it follows from (\ref{E1s}) that $E_1^s(\cdot,d_0)\bm e_m(d_0)$ and
$H_1^s(\cdot,d_0)\bm e_m(d_0)$ satisfy the Maxwell equations (\ref{me1})--(\ref{me2})
in $\R^3\se\ov{\wid{B}\cup B_R}$. On the other hand, by the definitions of $E^s_1$
and $H^s_1$ it is known that $E_1^s(\cdot,d_0)\bm e_m(d_0)$ and $H_1^s(\cdot,d_0)\bm e_m(d_0)$
also satisfy the Maxwell equations (\ref{me1})--(\ref{me2}) in $\R^3\se\ov{B\cup B_R}$. Since $\ov{B}\cap\ov{B_R}=\emptyset$, then the origin $0\notin B$ and $B\cap\wid{B}=\emptyset$.
Thus it is concluded that $E_1^s(\cdot,d_0)\bm e_m(d_0)$ and $H_1^s(\cdot,d_0)\bm e_m(d_0)$ satisfy
the Maxwell equations (\ref{me1})--(\ref{me2}) in $\R^3\se\ov{B_R}$. Since the electric total
field $E_1:=E_1(\cdot,d_0)\bm e_m(d_0)=E_1^i(\cdot,d_0)\bm e_m(d_0)+E_1^s(\cdot,d_0)\bm e_m(d_0)$
and the magnetic total field $H_1:=(1/{ik})\curl\,E_1$ satisfy the perfectly conducting boundary
condition on $\pa B$, then $(E_1,H_1)$ satisfies the problem (\ref{eq6})--(\ref{eq7}).
By the fact that $k$ is not a Maxwell eigenvalue in $B$ we have that $E_1\equiv0$ in $B$,
which, together with the analyticity of the electric total field $E_1$ in $\R^3\se\ov{B_R}$, implies
that $E_1\equiv0$ in $\R^3\se\ov{B_R}$. This is a contradiction, and so (\ref{conj}) dose not hold.
The proof is complete.
\end{proof}

We are now ready to prove Theorem \ref{o}.

\begin{proof}[Proof of Theorem \ref{o}]
Using (\ref{lsp}) and (\ref{m=}), we have
\be\label{m=+}
&|\bm e_m(\hat x)\cdot[E_1^\infty(\hat x,d_1)p_1+E_1^\infty(\hat x,d_2)p_2]|
=|\bm e_m(\hat x)\cdot[E_2^\infty(\hat x,d_1)p_1+E_2^\infty(\hat x,d_2)p_2|
\en
for all $\hat x\in\Sp^2\se\{N,S\}$, $d_1,d_2\in\Sp^2$, $m\in\{\phi,\theta\}$ and $p_1,p_2\in\R^3$
satisfying that $d_1\bot p_1$ and $d_2\bot p_2$. By (\ref{inc}) we know that $E^i(x,d)d=0$ for
all $\hat x,d\in\Sp^2$, and so, from the well-posedness of the scattering problem it follows that
\be\label{dd}
E_j^\infty(\hat x,d)d=0\;\;\;\forall\hat x,d\in\Sp^2.
\en
Thus (\ref{m=+}) is equivalent to the condition
\be\label{m=*}
|\bm e_m(\hat x)\cdot[E_1^\infty(\hat x,d_1)p_1+E_1^\infty(\hat x,d_2)p_2]|
=|\bm e_m(\hat x)\cdot[E_2^\infty(\hat x,d_1)p_1+E_2^\infty(\hat x,d_2)p_2|
\en
for all $\hat x\in\Sp^2\se\{N,S\}$, $d_1,d_2\in\Sp^2$, $m\in\{\phi,\theta\}$ and $p_1,p_2\in\R^3$.
This implies that
\be\no
&&\Rt\left\{[\bm e_m(\hat x)\cdot E_1^\infty(\hat x,d_1)p_1]\times
\ov{[\bm e_m(\hat x)\cdot E_1^\infty(\hat x,d_2)p_2]}\right\}\\ \label{real=}
&&\qquad\qquad=\Rt\left\{[\bm e_m(\hat x)\cdot E_2^\infty(\hat x,d_1)p_2]
\times\ov{[\bm e_m(\hat x)\cdot E_2^\infty(\hat x,d_2)p_2]}\right\}
\en
for all $\hat x\in\Sp^2\se\{N,S\}$, $d_1,d_2\in\Sp^2$, $m\in\{\phi,\theta\}$ and $p_1,p_2\in\R^3$.

For $d,q\in\Sp^2$ and $p\in\R^3$ define $r_j(\hat x,d,q,p):=|q\cdot E_j^\infty(\hat x,d)p|,\;j=1,2$.
Then, by setting $d_1=d_2=:d$ and $p_1=p_2=:p$ in (\ref{m=*}) we have
\be\no
r_1(\hat x,d,\bm e_m(\hat x),p)&=&r_2(\hat x,d,\bm e_m(\hat x),p)=:r(\hat x,d,\bm e_m(\hat x),p)\\ \label{eq8}
&&\qquad\;\forall\hat x\in\Sp^2\se\{N,S\},\;d\in\Sp^2,\;m\in\{\phi,\theta\},\;p\in\R^3.
\en
Thus we know that
\ben
\bm e_m(\hat x)\cdot E_j^\infty(\hat x,d)p
=r(\hat x,d,\bm e_m(\hat x),p)e^{i\vartheta_j^{(m)}(\hat x,d,p)}\;\;\;
\forall\hat x\in\Sp^2\se\{N,S\},\;d\in\Sp^2,\;m\in\{\phi,\theta\},\;p\in\R^3,j=1,2,
\enn
where $\vartheta_j^{(m)}$ is a real-valued function, $j=1,2$.

Let $m\in\{\phi,\theta\}$ be arbitrarily fixed. We then prove that
\be\label{eq3}
E_1^\infty(\hat x,d)\bm e_m(d)=E_2^\infty(\hat x,d)\bm e_m(d)\;\;\;
\forall\hat x\in\Sp^2,\;d\in\Sp^2\se\{N,S\}.
\en
To do this, we distinguish between the following two cases.

{\bf Case 1.} $r(\hat x,d,\bm e_m(\hat x),p)\not\equiv0$ for $\hat x\in\Sp^2\se\{N,S\}$, $d\in\Sp^2$
and $p\in\R^3$.

In this case, by the analyticity of $\bm e_m(\hat x)\cdot E_j^\infty(\hat x,d)p$ with respect to $\hat x$,
$d$ and $p$, respectively, $j=1,2$, it is easily seen that there exist open sets $U_1\subset\Sp^2\se\{N,S\}$, $U_2\subset\Sp^2$ and $V\subset\R^3$ small enough such that
(i) $r(\hat x,d,\bm e_m(\hat x),p)\not=0$ for all $\hat x\in U_1$, $d\in U_2$ and $p\in V$, and
(ii) $\vartheta_j^{(m)}(\hat x,d,p)$ is analytic with respect to $\hat x\in U_1$, $d\in U_2$ and $p\in V$,
respectively, $j=1,2$. Then, and by (\ref{real=}) we obtain that
\be\label{cos=}
\cos[\vartheta_1^{(m)}(\hat x,d_1,p_1)-\vartheta_1^{(m)}(\hat x,d_2,p_2)]
=\cos[\vartheta_2^{(m)}(\hat x,d_1,p_1)-\vartheta_2^{(m)}(\hat x,d_2,p_2)]
\en
for all $\hat x\in U_1$, $d_1,d_2\in U_2$ and $p_1,p_2\in V$. From (\ref{cos=}) and the fact
that $\vartheta_j^{(m)}(\hat x,d,p)$ is a real-valued analytic function of $\hat x\in U_1$,
$d\in U_2$ and $p\in V$, respectively, $j=1,2$, it is derived that there holds either
\be\label{vartheta1}
\vartheta_1^{(m)}(\hat x,d_1,p_1)-\vartheta_1^{(m)}(\hat x,d_2,p_2)
=\vartheta_2^{(m)}(\hat x,d_1,p_1)-\vartheta_2^{(m)}(\hat x,d_2,p_2)+2l\pi
\en
or
\be\label{vartheta2}
\vartheta_1^{(m)}(\hat x,d_1,p_1)-\vartheta_1^{(m)}(\hat x,d_2,p_2)=-[\vartheta_2^{(m)}(\hat x,d_1,p_1)-\vartheta_2^{(m)}(\hat x,d_2,p_2)]+2l\pi
\en
for some $l\in\Z$ and for all $\hat x\in U_1$, $d_1,d_2\in U_2$ and $p_1,p_2\in V$.

For the case when (\ref{vartheta1}) holds, we have
\be\no
\vartheta_1^{(m)}(\hat x,d_1,p_1)-\vartheta_2^{(m)}(\hat x,d_1,p_1)
&=&\vartheta_1^{(m)}(\hat x,d_2,p_2)-\vartheta_2^{(m)}(\hat x,d_2,p_2)+2l\pi\\ \label{eq1}
&&\;\;\;\;\quad\forall\hat x\in U_1,d_1,d_2\in U_2,p_1,p_2\in V.
\en
Fix $d_2\in U_2$, $p_2\in V$ and define
\be\label{alpha}
\alpha^{(m)}(\hat x):=\vartheta_1^{(m)}(\hat x,d_2,p_2)-\vartheta_2^{(m)}(\hat x,d_2,p_2)
\;\;\;\forall\hat x\in U_1.
\en
Then, by (\ref{eq1}) we get
\ben
\bm e_m(\hat x)\cdot E_1^\infty(\hat x,d)p&=&r(\hat x,d,\bm e_m(\hat x),p)e^{i\vartheta_1^{(m)}(\hat x,d,p)}\\
&=&r(\hat x,d,\bm e_m(\hat x),p)e^{i\alpha^{(m)}(\hat x)+i\vartheta_2^{(m)}(\hat x,d,p)}\\
&=&e^{i\alpha^{(m)}(\hat x)}\bm e_m(\hat x)\cdot E_2^\infty(\hat x,d)p
\enn
for all $\hat x\in U_1,d\in U_2$ and $p\in V$. By the analyticity of
$\bm e_m(\hat x)\cdot E_1^\infty(\hat x,d)p
-e^{i\alpha^{(m)}(\hat x)}\bm e_m(\hat x)\cdot E_2^\infty(\hat x,d)p$
in $d\in\Sp^2$ and $p\in\R^3$, respectively, it is deduced that
\be\label{alphax}
\bm e_m(\hat x)\cdot E_1^\infty(\hat x,d)p=e^{i\alpha^{(m)}(\hat x)}\bm e_m(\hat x)
\cdot E_2^\infty(\hat x,d)p\;\;\;\;\forall\hat x\in U_1,\;d\in\Sp^2,\;p\in\R^3.
\en
Changing the variables $\hat x\ra-d$ and $d\ra-\hat x$ in (\ref{alphax}) gives
\ben
\bm e_m(-d)\cdot E_1^\infty(-d,-\hat x)p=e^{i\alpha^{(m)}(-d)}\bm e_m(-d)\cdot E_2^\infty(-d,-\hat x)p
\;\;\;\forall-d\in U_1,\;\hat x\in\Sp^2,\;p\in\R^3.
\enn
The reciprocity relation $E_j^\infty(\hat x,d)=[E_j^\infty(-d,-\hat x)]^\top$
for all $\hat x,\;d\in\Sp^2$ ($j=1,2$) (see \cite[Theorem 6.30]{CK}) leads to the result
\ben
p\cdot E_1^\infty(\hat x,d)\bm e_m(-d)=e^{i\alpha^{(m)}(-d)}p\cdot E_2^\infty(\hat x,d)\bm e_m(-d)
\;\;\;\forall-d\in U_1,\;\hat x\in\Sp^2,\;p\in\R^3.
\enn
Since $\bm e_\phi(d)=-\bm e_\phi(-d)$ and $\bm e_\theta(d)=\bm e_\theta(-d)$, we have
\be\label{m1+}
E_1^\infty(\hat x,d)\bm e_m(d)=e^{i\alpha^{(m)}(-d)}E_2^\infty(\hat x,d)\bm e_m(d)\;\;\;
\forall-d\in U_1,\;\hat x\in\Sp^2.
\en
Now, by Rellich's lemma we obtain that
\be\label{eq2}
E_1^s(x,d)\bm e_m(d)=e^{i\alpha^{(m)}(-d)}E_2^s(x,d)\bm e_m(d)\;\;\;\forall x\in G,\;-d\in U_1,
\en
where $G$ denotes the unbounded component of the complement of $B\cup D_1\cup D_2$. The perfectly
conducting boundary condition on $\pa B$ gives that
$\nu\times [E_j^s(\cdot,d)\bm e_m(d)]=-\nu\times [E^i(\cdot,d)\bm e_m(d)]$ on $\pa B$ ($j=1,2$),
which, together with (\ref{eq2}), implies that
\be\label{boundaryvalue}
-\nu\times [E^i(\cdot,d)\bm e_m(d)]=-e^{i\alpha^{(m)}(-d)}\nu\times [E^i(\cdot,d)\bm e_m(d)]
\;\;\;\text{on}\;\;\;\pa B
\en
for all $-d\in U_1$. For arbitrarily fixed $-d\in U_1$, define
$\wid{E}:=(1-e^{i\alpha^{(m)}(-d)})E^i(\cdot,d)\bm e_m(d)$ and $\wid{H}:=(1/{ik})\curl\,\wid{E}$.
Then, by (\ref{boundaryvalue}) it follows that $(\wid{E},\wid{H})$ satisfies the electromagnetic
interior boundary value problem
\ben
\begin{cases}
\ds{\rm curl}\,\wid{E}-ik\wid{H}=0 & \text{in}\;\;\;B,\\
\ds{\rm curl}\,\wid{H}+ik\wid{E}=0 & \text{in}\;\;\;B,\\
\ds\nu\times\wid{E}=0 & \text{on}\;\;\;\pa B.
\end{cases}
\enn
Since $k$ is not a Maxwell eigenvalue in $B$ and $E^i(\cdot,d)\bm e_m(d)\not\equiv0$ in $B$, we have $e^{i\alpha^{(m)}(-d)}=1$ for all $-d\in U_1$. Thus it follows from (\ref{m1+}) that
\be\label{m1*}
E_1^\infty(\hat x,d)\bm e_m(d)=E_2^\infty(\hat x,d)\bm e_m(d)\;\;\;\forall-d\in U_1,\;\hat x\in\Sp^2.
\en
By the analyticity of $E_j^\infty(\hat x,d)\bm e_m(d)$ in $d\in\Sp^2\se\{N,S\}$, $j=1,2$,
the required equation (\ref{eq3}) follows.

For the case when (\ref{vartheta2}) holds, a similar argument as above gives the result
\be\label{m2}
E_1^\infty(\hat x,d)\bm e_m(d)=e^{i\beta^{(m)}(-d)}\ov{E_2^\infty(\hat x,d)}\bm e_m(d)\;\;\;
\forall\hat x\in\Sp^2,-d\in U_1,
\en
where $\beta^{(m)}$ is a real-valued function defined by
\be\label{beta}
\beta^{(m)}(\hat x):=\vartheta_1^{(m)}(\hat x,d_2,p_2)+\vartheta_2^{(m)}(\hat x,d_2,p_2)
\en
for all $\hat x\in U_1$ and for some fixed $d_2\in U_2$, $p_2\in V$. However, by Lemma \ref{conj*}
(\ref{m2}) does not hold.

{\bf Case 2.} $r(\hat x,d,\bm e_m(\hat x),p)\equiv0$ for $\hat x\in\Sp^2\se\{N,S\}$, $d\in\Sp^2$
and $p\in\R^3$. In this case, it is easily seen that (\ref{eq3}) holds.

Finally, by (\ref{dd}), (\ref{eq3}) and the linear combination of $\bm e_\phi(d)$, $\bm e_\theta(d),d$,
and noting the arbitrariness of $m\in\{\phi,\theta\}$ in (\ref{eq3}) and
the analyticity of $E_j^\infty(\hat x,d)$ in $d\in\Sp^2$, $j=1,2$, we deduce that
\be\label{1}
E_1^\infty(\hat x,d)=E_2^\infty(\hat x,d)\;\;\;\forall\hat x,d\in\Sp^2.
\en
This, together with \cite[Theorem 7.1]{CK}, \cite[Theorem 1]{K} and \cite[Theorem 3.1]{CCM2},
implies that $D_1=D_2$ and $\mathscr B_1=\mathscr B_2$. The proof is thus complete.
\end{proof}

\section{Uniqueness for inverse medium scattering}\label{s-m}
\setcounter{equation}{0}

Assume that $B$ is the given reference ball and that $n_0\in C^{2,\g}(\R^3)$, $0<\g<1$,
is the refractive index of a given inhomogeneous medium with the support of $n_0-1$ in $\ov{B}$.
Assume further that $n_1,n_2\in C^{2,\g}(\R^3)$ are the refractive indices of two inhomogeneous media
with $m_j:=n_j-1$ supported in $\ov{D_j}$, $j=1,2$. Denote by $E_j^s$, $H_j^s$, $E_j^\infty$
and $H_j^\infty$ the electric scattered field, the magnetic scattered field, the electric far-field
pattern and the magnetic far-field pattern, respectively, associated with
the inhomogeneous medium with the refractive index $\wid{n}_j$ and corresponding to the incident
electromagnetic waves $E^i$ and $H^i$, $j=1,2$. Here, the refractive index $\wid{n}_j$ is given by
\ben
\wid{n}_j(x):=\begin{cases}
n_0(x), & x\in B,\\
n_j(x), & x\in\R^3\se\ov{B}
\end{cases}
\enn
for $j=1,2$. It is noticed that if $\ov{D}_j\cap\ov{B}=\emptyset$ then $\wid{n}_j\in C^{2,\g}(\R^3)$.
See Figure \ref{figm} for the geometric description of the scattering problem.
Suppose $k$ is not an electromagnetic interior transmission eigenvalue in $B$.
Here, $k$ is called an electromagnetic interior transmission eigenvalue in $B$ if the homogeneous
electromagnetic interior transmission problem
\be\label{eq4}
\begin{cases}
\ds{\rm curl}\,\wid E-ik\wid H=0,\;\;\;{\rm curl}\,\wid H+ikn_0\wid E=0 & \text{in}\;\;B,\\
\ds{\rm curl}\,E_0-ikH_0=0,\;\;\;{\rm curl}\,H_0+ikE_0=0 & \text{in}\;\;B,\\  
\ds\nu\times(\wid E-E_0)=0,\;\;\;\nu\times(\wid H-H_0)=0 & \text{on}\;\;\pa B
\end{cases}
\en
has a nontrivial solution $(\wid E,\wid H,E_0,H_0)$. Note that if $n_0$ is chosen so that
$\I\,[n_0(x_0)]>0$ for some $x_0\in B$ then $k$ is not an electromagnetic interior
transmission eigenvalue (see the discussion in the proof of \cite[Theorem 9.8]{CK}).

\begin{figure}[!ht]
\centering
\includegraphics[width=0.5\textwidth]{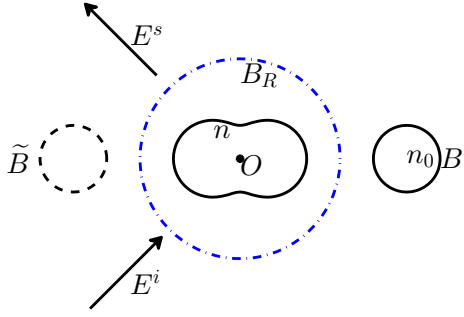}
\vspace{-1cm}
\caption{Scattering by an inhomogeneous medium.
}\label{figm}
\end{figure}

\begin{theorem}\label{m}
Assume that $B$ is a given ball filled with the inhomogeneous medium of the refractive index
$n_0\in C^{2,\g}(\R^3)$, $0<\g<1$, such that the support of $n_0-1$ is $\ov{B}$ and $k$ is not
an electromagnetic interior transmission eigenvalue in $B$.
Assume further that $n_1,n_2\in C^{2,\g}(\R^3)$ are the refractive
indices of two inhomogeneous media with $m_j:=n_j-1$ supported in $\ov{D_j}$, $j=1,2$.
Suppose $\ov{D_1\cup D_2}\subset B_R$, where $B_R$ is a ball of radius $R$ and centered at the origin
and satisfies that $\ov{B}\cap\ov{B_R}=\emptyset$. If the corresponding electric far-field patterns
satisfy (\ref{m=}) for all $\hat x\in\Sp^2\se\{N,S\}$, $d_1,d_2\in\Sp^2$, $m\in\{\phi,\theta\}$ and
$p_1,p_2\in\R^3$ satisfying that $d_1\bot p_1$ and $d_2\bot p_2$, then $n_1=n_2$.
\end{theorem}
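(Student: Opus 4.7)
The plan is to mirror the structure of the proof of Theorem \ref{o}, with the homogeneous electromagnetic interior transmission problem (\ref{eq4}) on the ball $B$ (which by hypothesis admits only the trivial solution) replacing the Maxwell interior eigenvalue problem (\ref{eq6})--(\ref{eq7}) that was used in the obstacle case. The new geometric feature is that the fields $(E_j,H_j)$ are now defined throughout $\R^3$, satisfying the $n_0$-medium Maxwell system inside $B$ and the free Maxwell system outside $\overline{B}\cup\overline{D_j}$, with tangential $E$ and $H$ continuous across $\partial B$.

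The first step is to establish an analogue of Lemma \ref{conj*}, namely that
\begin{equation*}
E_1^\infty(\hat x,d_0)\bm e_m(d_0) = e^{i\beta}\,\overline{E_2^\infty(\hat x,d_0)}\,\bm e_m(d_0),\qquad \hat x\in\Sp^2,
\end{equation*}
cannot hold. The Stratton--Chu/reflected-ball manipulation from the proof of Lemma \ref{conj*} goes through verbatim to produce, via Rellich's lemma, a radiating free-Maxwell solution $V$ on $\R^3\setminus\overline{\widetilde{B}\cup B_R}$ that agrees with $E_1^s(\cdot,d_0)\bm e_m(d_0)$ on $\R^3\setminus\overline{B\cup\widetilde{B}\cup B_R}$, where $\widetilde{B}:=\{x\in\R^3:-x\in B\}$. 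Since $0\notin B$ (as $\overline{B}\cap\overline{B_R}=\emptyset$), we have $B\cap\widetilde{B}=\emptyset$, so $V$ furnishes a free-Maxwell extension $\widetilde{E}_1^s$ of $E_1^s(\cdot,d_0)\bm e_m(d_0)$ into $B$. Define $\widetilde{E}_1:=E^i(\cdot,d_0)\bm e_m(d_0)+\widetilde{E}_1^s$ and $\widetilde{H}_1:=(1/ik)\,{\rm curl}\,\widetilde{E}_1$ in a neighborhood of $\overline{B}$. The original total field $(E_1,H_1)$ solves the $n_0$-medium Maxwell system in $B$, while $(\widetilde{E}_1,\widetilde{H}_1)$ solves free Maxwell in $B$, and the continuity of tangential $E$ and $H$ across $\partial B$, combined with the analyticity of $\widetilde{E}_1$ across $\partial B$, forces $\nu\times(E_1-\widetilde{E}_1)=0$ and $\nu\times(H_1-\widetilde{H}_1)=0$ on $\partial B$. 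Thus $(E_1,H_1,\widetilde{E}_1,\widetilde{H}_1)\big|_B$ is a solution of (\ref{eq4}); the non-eigenvalue hypothesis then gives $\widetilde{E}_1\equiv 0$ in $B$, which by analytic continuation as a free-Maxwell solution extends to all of $\R^3\setminus\overline{\widetilde{B}\cup B_R}$ and, in the overlap region, identifies with $E_1=E^i+E_1^s$, contradicting the non-decaying plane-wave behavior of $E^i$ at infinity.

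With the conjugate case excluded, the phase analysis from the proof of Theorem \ref{o} transfers without change up through equation (\ref{eq2}), yielding
\begin{equation*}
E_1^s(x,d)\bm e_m(d) = e^{i\alpha^{(m)}(-d)}\,E_2^s(x,d)\bm e_m(d),\qquad x\in G,\ -d\in U_1,
\end{equation*}
where $G$ is now the unbounded component of the complement of $\overline{B}\cup\overline{D_1}\cup\overline{D_2}$. At the step where the obstacle proof invoked the perfect-conductor boundary condition on $\partial B$, the medium version uses instead that $\widetilde{n}_1=\widetilde{n}_2=n_0$ in $B$, so the difference $\widetilde{E}:=E_1-e^{i\alpha^{(m)}(-d)}E_2$ and $\widetilde{H}:=H_1-e^{i\alpha^{(m)}(-d)}H_2$ solve the $n_0$-medium Maxwell system in $B$, while $E_0:=(1-e^{i\alpha^{(m)}(-d)})E^i(\cdot,d)\bm e_m(d)$ and $H_0:=(1/ik)\,{\rm curl}\,E_0$ solve free Maxwell in $B$. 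Continuity of tangential fields across $\partial B$ combined with the exterior identity gives $\nu\times(\widetilde{E}-E_0)=\nu\times(\widetilde{H}-H_0)=0$ on $\partial B$, so $(\widetilde{E},\widetilde{H},E_0,H_0)$ solves (\ref{eq4}); since $E^i(\cdot,d)\bm e_m(d)\not\equiv 0$ in $B$ and $k$ is not an interior transmission eigenvalue, $e^{i\alpha^{(m)}(-d)}=1$. The remainder of the obstacle proof (linear combinations of $\bm e_\phi(d)$, $\bm e_\theta(d)$, $d$ and analyticity in $d$) then yields $E_1^\infty\equiv E_2^\infty$ on $\Sp^2\times\Sp^2$, and the classical uniqueness theorem for inverse electromagnetic medium scattering (e.g., \cite[Theorem 10.5]{CK}) implies $\widetilde{n}_1=\widetilde{n}_2$ on $\R^3$; since both equal $n_0$ on $\overline{B}$ and $n_j$ off $\overline{B}$, we conclude $n_1=n_2$.

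The principal obstacle is the analogue of Lemma \ref{conj*}: one must carefully distinguish the extended field $\widetilde{E}_1^s$ (a free-Maxwell solution in $B$) from the original $E_1^s|_B$ (a solution of the $n_0$-medium system), and identify the pair $(E_1,\widetilde{E}_1)|_B$ as an interior transmission configuration rather than try to force it into a single-equation eigenvalue framework. Once this set-up is correct, the interior transmission non-eigenvalue hypothesis substitutes cleanly for the Maxwell eigenvalue hypothesis in every subsequent appeal.
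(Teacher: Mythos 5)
Your proposal is correct and follows essentially the same route as the paper: the same Stratton--Chu/reflected-ball extension argument yielding the interior transmission configuration on $B$ in place of the Maxwell eigenvalue problem (your careful separation of the free-Maxwell extension $\widetilde{E}_1^s$ from the original medium field $E_1^s|_B$ is exactly what the paper's Lemma \ref{conj*m} does, just spelled out more explicitly), and the same substitution of the transmission problem (\ref{eq4}) for the boundary condition on $\partial B$ when showing $e^{i\alpha^{(m)}(-d)}=1$. The only slip is the final citation: \cite[Theorem 10.5]{CK} is the \emph{acoustic} inverse medium uniqueness result, and the paper instead invokes \cite[Theorem 4.9]{Hahner} for the electromagnetic case (noting $\widetilde{n}_j\in C^{2,\g}(\R^3)$ since $\ov{B}\cap\ov{B_R}=\emptyset$) --- a reference-level correction, not a gap in the argument.
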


To prove Theorem \ref{m}, we need the following lemma which is similar to Lemma \ref{conj*}.

\begin{lemma}\label{conj*m}
Under the assumptions of Theorem \ref{m}, there does not hold the equation
\be\label{conjm}
E_1^\infty(\hat x,d_0)\bm e_m(d_0)=e^{i\beta}\ov{E_2^\infty(\hat x,d_0)}\bm e_m(d_0)
\;\;\;\forall\hat x\in\Sp^2,
\en
where $m\in\{\phi,\theta\}$ and $d_0\in\Sp^2$ are arbitrarily fixed and $\beta$ is a real constant.
\end{lemma}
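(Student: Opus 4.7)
The plan is to follow the blueprint of Lemma \ref{conj*} line by line, replacing the interior Maxwell eigenvalue problem (\ref{eq6})--(\ref{eq7}) by the interior transmission problem (\ref{eq4}). Assume for contradiction that (\ref{conjm}) holds. Because $\wid n_2\equiv1$ on $\R^3\se\ov{B\cup D_2}\supset\R^3\se\ov{B\cup B_R}$, the scattered field $E_2^s(\cdot,d_0)\bm e_m(d_0)$ is a radiating solution of the homogeneous Maxwell system in $\R^3\se\ov{B\cup B_R}$, so the Stratton--Chu formula applies on $\pa B\cup\pa B_R$ exactly as in Lemma \ref{conj*}. After taking complex conjugates, performing the change of variables $y\mapsto -y$ (using $\wid{B_R}=B_R$), and applying Rellich's lemma---the same sequence that produced (\ref{E1s})---one obtains an analytic extension $U^{\mathrm{ext}}$ of $e^{-i\beta}E_1^s(\cdot,d_0)\bm e_m(d_0)$ into $\R^3\se\ov{\wid B\cup B_R}$ which satisfies the homogeneous Maxwell system there. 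Since $\ov B\cap\ov{B_R}=\emptyset$ forces $B\cap(\wid B\cup B_R)=\emptyset$, this extension is in particular defined on $B$.

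Next I would set up (\ref{eq4}) on $B$ with the original total field $\wid E:=E^i(\cdot,d_0)\bm e_m(d_0)+E_1^s(\cdot,d_0)\bm e_m(d_0)$, $\wid H:=(1/ik)\curl\wid E$, which solves the Maxwell system with refractive index $n_0$ in $B$, and the auxiliary pair $E_0:=E^i(\cdot,d_0)\bm e_m(d_0)+e^{i\beta}U^{\mathrm{ext}}$, $H_0:=(1/ik)\curl E_0$, which solves the homogeneous Maxwell system in $B$. In the exterior region $\R^3\se\ov{B\cup\wid B\cup B_R}$, Rellich's lemma yields $U^{\mathrm{ext}}=e^{-i\beta}E_1^s(\cdot,d_0)\bm e_m(d_0)$, so $\wid E=E_0$ identically in a one-sided neighborhood of $\pa B$ from the outside. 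Combined with the $C^{2,\g}$-regularity of $\wid n_1$ across $\pa B$ (which gives continuity of $\wid E$ and $\curl\wid E$ across $\pa B$) and the analyticity of $U^{\mathrm{ext}}$ across $\pa B$, this forces $\nu\times(\wid E-E_0)=0$ and $\nu\times(\wid H-H_0)=0$ on $\pa B$. Hence $(\wid E,\wid H,E_0,H_0)$ solves (\ref{eq4}), and by the hypothesis that $k$ is not an interior transmission eigenvalue in $B$, all four fields vanish identically on $B$.

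Finally, the vanishing of $E_0$ on $B$ gives $U^{\mathrm{ext}}=-e^{-i\beta}E^i(\cdot,d_0)\bm e_m(d_0)$ on $B$. Since both sides are analytic on the connected open set $\R^3\se\ov{\wid B\cup B_R}$, the identity propagates throughout, and in particular to $\R^3\se\ov{B\cup\wid B\cup B_R}$, where $U^{\mathrm{ext}}$ coincides with $e^{-i\beta}E_1^s(\cdot,d_0)\bm e_m(d_0)$. It follows that $E_1^s(\cdot,d_0)\bm e_m(d_0)\equiv -E^i(\cdot,d_0)\bm e_m(d_0)=-ik\bm e_m(d_0)e^{ikx\cdot d_0}$ in this unbounded region; letting $|x|\to\infty$ the radiating left-hand side decays while the plane-wave right-hand side does not, giving the desired contradiction. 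The hard part, compared with Lemma \ref{conj*}, is the bookkeeping in the second step: identifying the correct auxiliary pair $(E_0,H_0)$ and verifying the transmission boundary conditions on $\pa B$ so that the analytic extension can legitimately be plugged into (\ref{eq4}); once the matching of tangential traces is established, the rest of the argument parallels Lemma \ref{conj*}.
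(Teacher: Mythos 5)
Your proposal is correct and follows the same skeleton as the paper's proof: Stratton--Chu on $\pa B\cup\pa B_R$, complex conjugation combined with the reflection $y\mapsto-y$ (with $B_R$ invariant), Rellich's lemma to produce the analytic extension across $\pa B$, and then the interior transmission problem (\ref{eq4}) with the non-eigenvalue hypothesis to force vanishing in $B$, ending in a decay contradiction. Where you diverge is precisely in the bookkeeping you flag as the hard part, and there your version is more scrupulous than the paper's own writeup. The paper simply takes $(\wid E,\wid H,E_0,H_0):=(E_1,H_1,E_1,H_1)$, tacitly identifying the analytic extension with the physical total field inside $B$; taken literally this cannot be right, since inside $B$ the physical field solves the Maxwell system with coefficient $n_0$ while the extension solves the vacuum system (were they the same function, subtracting the two systems would give $E_1=0$ on the dense set where $n_0\neq1$, and the interior-transmission hypothesis would never be needed). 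Your quadruple, with $\wid E$ the physical total field and $E_0=E^i(\cdot,d_0)\bm e_m(d_0)+e^{i\beta}U^{\mathrm{ext}}$, together with the trace-matching argument through the one-sided exterior neighborhood of $\pa B$ (using $\wid n_1\in C^{2,\g}$ for continuity of $\wid E$ and $\curl\wid E$, and the analyticity of $U^{\mathrm{ext}}$ across $\pa B$), is the correct reading; it mirrors the construction the paper carries out carefully in the proof of Theorem \ref{m} itself. Your endgame also differs usefully: you propagate $E_0\equiv0$ through the connected set $\R^3\se\ov{\wid B\cup B_R}$, where $E_0$ is genuinely analytic, and contradict the $O(1/|x|)$ decay of the radiating field against the non-decaying plane wave $ik\bm e_m(d_0)e^{ikx\cdot d_0}$; the paper instead propagates $E_1\equiv0$ from $B$ outward, which requires continuing the total field---not analytic inside $B$, where $n_0$ is non-constant---across $\pa B$, i.e.\ an unspoken unique-continuation step. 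In short: same route as the paper, but your execution explicitly closes two small gaps that the paper's terse proof leaves implicit.
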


\begin{proof}
Assume to the contrary that (\ref{conjm}) holds.
Then, by a similar argument as in the proof of Lemma \ref{conj*} it can be derived that
$E_1^s(\cdot,d_0)\bm e_m(d_0)$ and $H_1^s(\cdot,d_0)\bm e_m(d_0)$ can be analytically extended
into $\R^3\se\ov{B_R}$ and satisfy the Maxwell equations (\ref{me1})--(\ref{me2}) in $\R^3\se\ov{B_R}$.
Noting that the incident waves $E^i(\cdot,d_0)\bm e_m(d_0)$ and $H^i(\cdot,d_0)\bm e_m(d_0)$ satisfy
the Maxwell equations (\ref{me1})--(\ref{me2}) in $\R^3$, we obtain that the total fields
$E_1:=E_1(\cdot,d_0)\bm e_m(d_0)=E_1^i(\cdot,d_0)\bm e_m(d_0)+E_1^s(\cdot,d_0)\bm e_m(d_0)$ and
$H_1:=(1/{ik}){\rm curl}\,E_1$ satisfy the Maxwell equations (\ref{me1})--(\ref{me2}) in $B$.

On the other hand, from the definition of $\wid{n}_1$ and the electromagnetic medium scattering problem
it follows that the total fields $E_1$ and $H_1$ also satisfy the first two Maxwell equations in (\ref{eq4})
in $B$. Thus, $(\wid E,\wid H,E_0,H_0):=(E_1,H_1,E_1,H_1)$ satisfies the problem (\ref{eq4}).
Since $k$ is not an electromagnetic interior transmission eigenvalue in $B$,
it follows that $E_1\equiv0$ in $B$. By the analyticity of the electric scattered field
$E_1^s(\cdot,d_0)\bm e_m(d_0)$ in $\R^3\se\ov{B_R}$, it is easily seen that $E_1$ is also analytic
in $\R^3\se\ov{B_R}$, and thus we have that $E_1\equiv0$ in $\R^3\se\ov{B_R}$.
This is a contradiction, implying that (\ref{conjm}) dose not hold. The proof is then complete.
\end{proof}

With the aid of Lemma \ref{conj*m}, we can now prove Theorem \ref{m}.

\begin{proof}[Proof of Theorem \ref{m}.]
Our proof follows a similar argument as for the case of inverse obstacle scattering in Section \ref{s-o}.
Arguing similarly as in the proof of Theorem \ref{o}, we can easily obtain (\ref{eq8}).
We now want to prove (\ref{eq3}) for arbitrarily fixed $m\in\{\phi,\theta\}$.
First, if $r(\hat x,d,\bm e_m(\hat x),p)\equiv0$ for $\hat x\in\Sp^2\se\{N,S\}$, $d\in\Sp^2$
and $p\in\R^3$, then it is obvious that (\ref{eq3}) holds.

We now consider the case $r(\hat x,d,\bm e_m(\hat x),p)\not\equiv0$ for $\hat x\in\Sp^2\se\{N,S\}$,
$d\in\Sp^2$ and $p\in\R^3$.
By a similar argument as in the proof of Theorem \ref{o} it can be shown that either (\ref{m1+})
or (\ref{m2}) holds for some open set $U_1\subset\Sp^2\se\{N,S\}$, where $\alpha^{(m)}$ and $\beta^{(m)}$
are defined similarly as in (\ref{alpha}) and (\ref{beta}), respectively, in the proof of Theorem \ref{o}.
But, Lemma \ref{conj*m} implies that (\ref{m2}) does not hold. Thus we only need to consider
the case when (\ref{m1+}) holds. By (\ref{m1+}) and Rellich's lemma we obtain (\ref{eq2}),
where $G$ is defined as above. For any fixed $-d\in U_1$, define
\ben
&&\wid{E}:=[E^i(\cdot,d)+E^s_1(\cdot,d)]\bm e_m(d)
-e^{i\alpha^{(m)}(-d)}[E^i(\cdot,d)+E^s_2(\cdot,d)]\bm e_m(d),\;\;\;
\wid H:=(1/{ik}){\rm curl}\,\wid E,\\
&& E_0:=\left(1-e^{i\alpha^{(m)}(-d)}\right)E^i(\cdot,d)\bm e_m(d),\;\;\; H_0:=(1/{ik}){\rm curl}\,E_0.
\enn
Then, by (\ref{eq2}) we have $\wid E=E_0$ in $G$, and so $(\wid E,\wid H,E_0,H_0)$ satisfies
the boundary conditions on $\pa B$ in the problem (\ref{eq4}). Further, by the definition of
$\wid{E}$, $\wid{H}$, $E_0$ and $H_0$ it is known that $(\wid E,\wid H,E_0,H_0)$ satisfies
the problem (\ref{eq4}). Since $k$ is not an electromagnetic interior transmission eigenvalue in $B$,
we obtain that $e^{i\alpha^{(m)}(-d)}=1$ for all $-d\in U_1$, which means that (\ref{m1*}) holds.
By this and the analyticity of $E_j^\infty(\hat x,d)\bm e_m(d)$ in $d\in\Sp^2\se\{N,S\}$, $j=1,2$,
it follows that (\ref{eq3}) is true.
Similarly, we can show that (\ref{1}) holds.
Since $\ov{D_1\cup D_2}\subset B_R$ and $\ov{B}\cap\ov{B_R}=\emptyset$ then $\wid{n}_j\in C^{2,\g}(\R^3)$,
$j=1,2$. Therefore, by (\ref{1}) and \cite[Theorem 4.9]{Hahner} we obtain that $n_1=n_2$.
The proof is then complete.
\end{proof}

\section{Conclusion}\label{s5}

In this paper, by adding a given reference ball into the electromagnetic scattering system,
we established uniqueness results for inverse electromagnetic obstacle and medium
scattering with phaseless electric far-field data generated by infinitely many sets of superpositions
of two electromagnetic plane waves with different directions and polarizations at a fixed
frequency for the first time.
These uniqueness results extend our previous results in \cite{XZZ2} for the acoustic case to the
electromagnetic case.
Our method is based on a simple technique of using Rellich's lemma and the Stratton--Chu formula for
the radiating solutions to the Maxwell equations.
In the future, we hope to show the same uniqueness results without using the reference ball,
which is more challenging.

\section*{Acknowledgements}

The authors thank Professor Xudong Chen at the National University of Singapore for helpful and
constructive discussions on the measurement technique of electromagnetic waves.
This work is partly supported by the NNSF of China grants 91630309, 11871466
and 11571355.

\end{document}